\documentclass[a4paper]{amsart}

\usepackage{amsmath,amssymb,amsthm}
\usepackage{enumitem}
\usepackage{mathtools}
\usepackage[all]{xy}
\usepackage{tikz}

\usepackage{longtable}

\theoremstyle{plain}
\newtheorem{theorem}{Theorem}[section]
\newtheorem{proposition}[theorem]{Proposition}

\newtheorem{corollary}[theorem]{Corollary}

\theoremstyle{definition}
\newtheorem{definition}[theorem]{Definition}
\newtheorem*{acknowledgements}{Acknowledgements}

\theoremstyle{remark}

\newtheorem{remark}[theorem]{Remark}
\newtheorem{notation}[theorem]{Notation}
\newtheorem{convention}[theorem]{Convention}

\newtheorem{condition}[theorem]{Condition}

\numberwithin{equation}{theorem}

\DeclareMathOperator{\Pic}{Pic}

\DeclareMathOperator{\pr}{pr}

\DeclareMathOperator{\Aut}{Aut}
\DeclareMathOperator{\Bl}{Bl}

\DeclareMathOperator{\sub}{sub}

\DeclareMathOperator{\gen}{gen}
\DeclareMathOperator{\spe}{sp}

\DeclareMathOperator{\DF}{DF}
\DeclareMathOperator{\ord}{ord}
\DeclareMathOperator{\vol}{vol}


\newcommand{\sF}{\mathcal{F}}

\newcommand{\sL}{\mathcal{L}}

\newcommand{\sC}{\mathcal{C}}

\newcommand{\sX}{\mathcal{X}}


\newcommand{\bC}{\mathbf{C}}

\newcommand{\bG}{\mathbf{G}}

\newcommand{\bP}{\mathbf{P}}
\newcommand{\bQ}{\mathbf{Q}}
\newcommand{\bR}{\mathbf{R}}

\newcommand{\bA}{\mathbf{A}}

\DeclareMathOperator{\Pas}{\mathcal{P}}
\newcommand{\PF}{\Pas_{F_4}}
\newcommand{\PG}{\Pas_{A_1 \times G_2}}
\newcommand{\tX}{\widetilde X}
\newcommand{\blank}{{-}}

\newcommand{\dF}{
{
\SelectTips{}{12}
\objectmargin={0pt}
\objectheight={20pt}
\objectwidth={5pt}
\xygraph{!{<0cm,0cm>;<0.7cm,0cm>:<0cm,0.7cm>::}
\circ*=!D{\scriptstyle 1}-[r] \circ*=!D{\scriptstyle 2} -@2{-}|(.6)@{>}[r] \circ*=!D{\scriptstyle 3}-[r] \circ*=!D{\scriptstyle 4} 
}}
}
\newcommand{\dG}{
{
\SelectTips{}{12}
\objectmargin={0pt}
\objectheight={20pt}
\objectwidth={5pt}
\xygraph{!{<0cm,0cm>;<0.7cm,0cm>:<0cm,0.7cm>::}
\circ*=!D{\scriptstyle 1}-@3{-}|(.5)@{<}[r] \circ*=!D{\scriptstyle 2}
}}
}
\newcommand{\dAG}{
{
\SelectTips{}{12}
\objectmargin={0pt}
\objectheight={20pt}
\objectwidth={5pt}
\xygraph{!{<0cm,0cm>;<0.7cm,0cm>:<0cm,0.7cm>::}
\circ*=!D{\scriptstyle 0}-@0{-}[r] \circ*=!D{\scriptstyle 1}-@3{-}|(.5)@{<}[r] \circ*=!D{\scriptstyle 2}
}}
}

\newcommand{\rG}{
\begin{tikzpicture}
\draw[->] (0,0)--(6*60:1)node[right] {$\alpha_1$};
\draw[->] (0,0)--(60:1);
\draw[->] (0,0)--(2*60:1);
\draw[->] (0,0)--(3*60:1);
\draw[->] (0,0)--(4*60:1);
\draw[->] (0,0)--(5*60:1);
\draw[->] (0,0)--(30:{sqrt(3)});
\draw[->] (0,0)--(30+60:{sqrt(3)});
\draw[->] (0,0)--(30+2*60:{sqrt(3)})node[left] {$\alpha_2$};
\draw[->] (0,0)--(30+3*60:{sqrt(3)});
\draw[->] (0,0)--(30+4*60:{sqrt(3)});
\draw[->] (0,0)--(30+5*60:{sqrt(3)});
\end{tikzpicture}
}

\AtBeginDocument{%
\def\MR#1{}
}

\setenumerate{
label=(\arabic*),
font=\normalfont
}

\title[K\"ahler-Einstein metrics on Pasquier's two-orbits varieties]{K\"ahler-Einstein metrics on\\Pasquier's two-orbits varieties}
\author[A. KANEMITSU]{Akihiro KANEMITSU}
\date{\today}
\address{Department of Mathematics, Graduate school of Science, Kyoto University, Kyoto 606-8502, Japan}
\email{kanemitu@math.kyoto-u.ac.jp}
\thanks{The author is a JSPS Research Fellow and supported by the Grant-in-Aid for JSPS fellows (JSPS KAKENHI Grant Number 18J00681).}
\subjclass[2010]{14J45, 14J40}

\keywords{Fano manifold, K\"ahler-Einstein metric, K-stability, two orbits variety}

\begin{document}

\begin{abstract}
We show that there exist K\"ahler-Einstein metrics on two exceptional Pasquier's two-orbits varieties.
As an application, we will provide a new example of K-unstable Fano manifold with Picard number one.
\end{abstract}

\maketitle

\section*{Introduction}
\subsection{Pasquier's two-orbits varieties}
In 2009, as an application of his study on horospherical varieties, Pasquier
classified a special kind of two-orbits varieties that satisfies the following condition \cite{Pas09}:

\begin{condition}\label{cond:Pasquier}
\hfill
\begin{itemize}
 \item  $X$ is a Fano manifold with Picard number one.
 \item The connected automorphism group $G \coloneqq \Aut^{0} (X)$ acts on $X$ with two orbits $X^0 \coprod Z$, where $X^0$ is the open orbit and $Z$ is the closed orbit.
 \item The blow-up $\Bl_Z X$ is again a two-orbits variety $X^0 \coprod E$ with respect to $G$, where $E$ is the exceptional divisor of the blow-up.
\end{itemize}
\end{condition}

In summary, he showed that each Fano manifold with the above condition satisfies one of the following:
\begin{enumerate}
 \item $X$ is a horospherical variety; \label{item:Pas1}
 \item $X$ is isomorphic to a $23$-dimensional Fano manifold $\PF$ with $G = F_4$;
 \item $X$ is isomorphic to an $8$-dimensional Fano manifold $\PG$ with $G=A_1 \times G_2$.
\end{enumerate}
In the course of his study, he also determined the group $G$ for case \ref{item:Pas1}; in each of horospherical cases, $G$ is not reductive, and hence there are no K\"ahler-Einstein metrics on $X$ \cite{Mat57}.

In a previous article \cite{Kan20}, the author studied stability of tangent bundles for varieties satisfying Condition~\ref{cond:Pasquier}.
Therein we have provided examples of Fano manifolds with Picard number one whose tangent bundles are unstable, which disproved a folklore conjecture.
The present article is a continuation of this previous paper \cite{Kan20};
the purpose of this article is to show the existence of K\"ahler-Einstein metrics in the two remaining  cases $\PF$ and $\PG$:

\begin{theorem}\label{thm:KE}
 There are K\"ahler-Einstein metrics on $\PF$ and $\PG$.
\end{theorem}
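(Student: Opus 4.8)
The plan is to establish K-polystability of each variety and then invoke the Yau--Tian--Donaldson correspondence (Chen--Donaldson--Sun, together with Tian and Berman): a Fano manifold carries a K\"ahler--Einstein metric if and only if it is K-polystable. Since in both cases the reductive group $G = \Aut^{0}(X)$ (namely $F_4$, respectively $A_1 \times G_2$) is semisimple, the classical Futaki invariant vanishes, and by the equivariant reduction of Datar--Sz\'ekelyhidi it suffices to test K-polystability against $G$-equivariant test configurations. Equivalently, working with the stability threshold, it suffices to show that the $G$-invariant delta invariant satisfies $\delta_G(X) \geq 1$ and that equality, should it occur, is realized only by a polystable (product-type or symmetric) degeneration; here $\delta_G(X) = \inf_E A_X(E)/S_X(E)$, the infimum being taken over $G$-invariant divisorial valuations $E$ over $X$ (Blum--Jonsson, Zhuang, Golota).

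The first substantive step is to classify the $G$-invariant divisorial valuations over $X$. Because $X$ has exactly two $G$-orbits $X^0 \sqcup Z$ with $X^0$ open and $Z$ of codimension at least two (so that $\Bl_Z X \to X$ is a genuine blow-up with a divisorial exceptional locus), there is no $G$-invariant prime divisor on $X$ itself, and the $G$-invariant valuations are governed by the almost-homogeneous boundary geometry encoded in Condition~\ref{cond:Pasquier}. Using the explicit blow-up $\Bl_Z X = X^0 \sqcup E$ of the closed orbit, together with the two-ray geometry $X \leftarrow \Bl_Z X \to X'$ furnished by Pasquier's description, I expect the essential $G$-invariant divisorial valuations to be generated, up to rescaling, by $\ord_E$. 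The task is then to reduce the evaluation of $\delta_G$ to this single distinguished valuation $v = \ord_E$.

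With this reduction in hand, the computation proceeds on the blow-up $Y = \Bl_Z X$ with exceptional divisor $E$. I would read off the log discrepancy $A_X(E) = \operatorname{codim}_X Z$ from the smooth blow-up formula, and compute the expected vanishing order
\[
S_X(E) = \frac{1}{(-K_X)^n}\int_0^{\tau} \vol\bigl(-K_X - t E\bigr)\, dt,
\]
where $n = \dim X$ and $\tau$ is the pseudoeffective threshold of $E$ against $-K_X$. Evaluating $\vol(-K_X - tE)$ as a function of $t$ is the heart of the matter: one expresses $-K_X$ as the appropriate multiple of the ample generator of $\Pic(X) \cong \bZ$, pulls it back to $Y$, and extracts the Zariski-type chamber decomposition of the nef and pseudoeffective cones of $Y$ from the two contractions of the two-ray game. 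The required intersection numbers are then pinned down by the explicit projective geometry of $\PF$ and $\PG$ --- their minimal embeddings, the structure of the homogeneous closed orbit $Z$, and the Chern data of the normal bundle $N_{Z/X}$ --- after which one checks numerically that $A_X(E)/S_X(E) \geq 1$ in each case.

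The main obstacle is precisely this volume computation for $-K_X - tE$: determining the nef and pseudoeffective thresholds and the piecewise-polynomial behaviour of the volume along the ray spanned by $E$. In the $23$-dimensional $F_4$-case the intersection-theoretic bookkeeping is delicate and must rely on the explicit two-orbit/two-ray structure rather than on generic estimates. A secondary subtlety is the equality case: if $\delta_G(X) = 1$ is forced by $\ord_E$, then I would need to verify that the associated degeneration has a central fibre that is a polystable $G$-variety, so as to upgrade K-semistability to K-polystability --- and here again the homogeneity of the limit is what one exploits.
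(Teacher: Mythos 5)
Your overall strategy is the same as the paper's: reduce K-polystability, via the $G$-equivariant valuative criterion for the reductive group $G=\Aut^0(X)$, to checking strict positivity of $\beta$ (equivalently $A/S>1$) on the single $G$-invariant divisorial valuation $\ord_E$ attached to the blow-up of the closed orbit, then invoke the Yau--Tian--Donaldson correspondence. Two framework remarks. First, the paper uses Zhuang's criterion (Theorem~\ref{thm:valuative_criterion}), by which $\beta(F)>0$ for all $G$-invariant prime divisors $F$ over $X$ gives K-polystability outright; so your worry about the equality case $\delta_G(X)=1$ and the polystability of a limiting central fibre never needs to be addressed, since the computation yields strict inequality. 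Second, the uniqueness of $\ord_E$, which you only say you ``expect,'' does follow cleanly from Condition~\ref{cond:Pasquier}: since $X$ has exactly two orbits and $Z$ has codimension $\geq 2$, any $G$-invariant divisorial valuation has center $Z$ on $X$; its center on $\Bl_Z X$ is then a $G$-invariant closed subvariety contained in $E$, hence equals $E$ (as $\Bl_Z X$ again has only two orbits), and a divisorial valuation centered at the generic point of a prime divisor on a normal variety is a multiple of $\ord_E$.

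The genuine gap is that you never carry out---and do not identify a workable method for---the computation that is the actual content of the proof. You propose to evaluate $\vol(\varphi^*(-K_X)-xE)$ via a Zariski-type chamber decomposition and the Chern data of $N_{Z/X}$, and you yourself flag this as the main obstacle, particularly in dimension $23$. The paper's solution rests on two ideas absent from your plan. First, by Fujita's identity (Proposition~\ref{prop:Fujita}), $\beta(E)=\xi(Z)=n\int_0^{\epsilon(Z)}(r-x)\,\bigl(E\cdot(\varphi^*(-K_X)-xE)^{n-1}\bigr)\,dx$, which requires only intersection numbers in the nef range $0\le x\le\epsilon(Z)$ and no Zariski decomposition at all: in both cases $\tX$ carries a smooth fibration $\pi\colon\tX\to Y$, and $\varphi^*(-K_X)-\epsilon(Z)E$ is a pullback from $Y$, hence not big, so the volume vanishes past the nef threshold and the identity $\xi(Z)=\beta(E)$ of \eqref{eq:xi} applies. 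Second, every intersection number $\bigl(E\cdot(\varphi^*(-K_X)-xE)^{n-1}\bigr)$ is computed on the divisor $E\simeq G/P_{Y,Z}$ itself, where the restricted class is an explicit combination of the weight divisors (e.g.\ $x\,p_Y^*H_Y+(8-x)\,p_Z^*(H_X|_Z)$ for $\PF$), and its top self-intersection is given by the Borel--Hirzebruch formula $\deg H_\omega=(\dim E)!\prod_{\gamma\in\sC}(\omega,\gamma)/(\rho,\gamma)$; this turns the whole question into root-system arithmetic producing explicit polynomials whose integrals are strictly positive. Without the reduction to the homogeneous divisor $E$ and this root-theoretic degree formula, your proposal stalls exactly where you admit it does, so it does not constitute a proof.
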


\begin{remark}
In \cite[Theorem~4.1]{Del20YTD}, Delcriox also obtained the proof of the above result independently.
The method used in his proof is different from our proof; Delcriox's proof relies on a criterion of K-stability for spherical varieties, while our proof does not use the fact $\PF$ and $\PG$ are spherical, but relies on the $G$-equivariant valuative criterion of K-stability.
\end{remark}

\begin{remark}
More precisely, we will show that  $\PF$ and $\PG$ are K-polystable, which is equivalent to the existence of K\"ahler-Einstein metrics by virtue of \cite{Tia97,Don02,Ber16,CDS15a,CDS15b,CDS15c,Tia15}.
\end{remark}

The variety $\PG$ is a Mukai $8$-fold of genus $7$ (see \cite[Section~4.2]{BFM20} or \cite[Remark 4.2]{Kan20}).
Recall that, by \cite[Section~6]{Kuz18}, there are two isomorphic classes $X_{\spe}$ and $X_{\gen}$ of Mukai $8$-folds of genus $7$, and $X_{\gen}$ degenerates to $X_{\spe}$.
One can prove that $\PG \simeq X_{\gen}$ (cf. \cite[Remark 4.2]{Kan20}).
See Section~\ref{sect:remark} for a proof of this fact and also \cite[Proposition 4.8, Remark 4.10, Remark 5.4]{BFM20} for proofs.
Thus, by the general properties of K-moduli, we have:

\begin{corollary}\label{cor:K-unstable}
$X_{\spe}$ is not K-semistable.
In particular, there are no K\"ahelr-Einstein metrics on $X_{\spe}$.
\end{corollary}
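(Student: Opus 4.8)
The plan is to deduce the corollary purely formally from Theorem~\ref{thm:KE} together with the now-standard structure theory of the moduli of K-semistable Fano varieties, using as geometric input only the two facts recalled above and proved in Section~\ref{sect:remark}: that $\PG \simeq X_{\gen}$, and that $X_{\gen}$ degenerates to $X_{\spe}$. Since $X_{\gen}$ and $X_{\spe}$ are both smooth Mukai $8$-folds of genus $7$, they are $\mathbf{Q}$-Gorenstein Fano varieties of one and the same numerical type (equal dimension and anticanonical volume), and the degeneration $X_{\gen} \rightsquigarrow X_{\spe}$ is realized by a flat family $\pi \colon \sX \to \Delta$ over a smooth pointed curve with $\sX_t \simeq X_{\gen}$ for $t \neq 0$ and $\sX_0 \simeq X_{\spe}$. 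I would set up everything inside the moduli stack $\mathcal{M}^{\mathrm{Kss}}$ of K-semistable Fano varieties of this numerical type, which is known to be a finite-type Artin stack admitting a separated good moduli space $M^{\mathrm{Kps}}$ whose points are exactly the K-polystable members (openness of K-semistability, existence of the good moduli space, and its separatedness/properness).

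The one nontrivial input is that, by Theorem~\ref{thm:KE}, $X_{\gen} \simeq \PG$ admits a K\"ahler-Einstein metric and is therefore K-polystable. By the defining property of the good moduli space, the K-polystable Fano varieties are precisely the \emph{closed} points of $\mathcal{M}^{\mathrm{Kss}}$; hence $[X_{\gen}]$ is closed, so $\overline{\{[X_{\gen}]\}} = \{[X_{\gen}]\}$ and $X_{\gen}$ admits no proper K-semistable degeneration. Now argue by contradiction: if $X_{\spe}$ were K-semistable, then every fiber of $\pi$ would be K-semistable (the general fibers are $X_{\gen}$ and the special one is $X_{\spe}$), so $\pi$ would induce a morphism $\Delta \to \mathcal{M}^{\mathrm{Kss}}$ carrying the generic point to $[X_{\gen}]$ and the closed point to $[X_{\spe}]$. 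By continuity $[X_{\spe}]$ lies in the closure of $[X_{\gen}]$, whence $[X_{\spe}] = [X_{\gen}]$, i.e. $X_{\spe} \simeq X_{\gen}$, contradicting that they are distinct isomorphism classes. Thus $X_{\spe}$ is not K-semistable. Finally, since K-polystability implies K-semistability, $X_{\spe}$ is a fortiori not K-polystable, and by the equivalence between K-polystability and the existence of a K\"ahler-Einstein metric \cite{Tia97,Don02,Ber16,CDS15a,CDS15b,CDS15c,Tia15} there is no K\"ahler-Einstein metric on $X_{\spe}$.

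The substance of the argument lies not in these formal manipulations but in importing the correct general results about K-moduli and verifying their hypotheses, and this is the step I expect to require the most care. Concretely, one must cite the construction of $\mathcal{M}^{\mathrm{Kss}}$ and of its separated good moduli space $M^{\mathrm{Kps}}$ — equivalently, the identification of K-polystable varieties with the closed points and the openness of K-semistability in families — and confirm that $X_{\gen}$ and $X_{\spe}$ genuinely belong to the same moduli problem, so that the flat family realizing the degeneration does define a morphism into the stack. Once this framework is in place the closedness (equivalently, separatedness) property does all the work and no further geometric information about $X_{\spe}$ is needed; the same conclusion may be phrased as the uniqueness of K-polystable degenerations, which forces the K-polystable degeneration of a hypothetically K-semistable $X_{\spe}$ to be the closed point $[X_{\gen}]$ and hence, in view of the degeneration $X_{\gen} \rightsquigarrow X_{\spe}$, to satisfy $X_{\spe} \simeq X_{\gen}$.
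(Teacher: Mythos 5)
Your proof is correct, and it shares the paper's skeleton: assume $X_{\spe}$ is K-semistable, use the K-polystability of $X_{\gen}\simeq\PG$ supplied by Theorem~\ref{thm:KE}, and derive a contradiction by feeding the degeneration $X_{\gen}\rightsquigarrow X_{\spe}$ into the K-moduli machinery. The difference is in how the contradiction is extracted. The paper invokes \cite{BX19} to conclude that $X_{\gen}$ and $X_{\spe}$ are $S$-equivalent, then \cite{LWX18} to produce a common K-polystable degeneration, which must be $X_{\gen}$ itself; this yields a degeneration of $X_{\spe}$ to $X_{\gen}$, and the contradiction comes from the \emph{rigidity} of $X_{\gen}$ (i.e.\ $H^1(T_{X_{\gen}})=0$, established in the proposition immediately preceding the corollary), since a rigid variety cannot be the special fiber of a family whose general fiber is non-isomorphic to it. You instead package the same moduli-theoretic input as the closed-point characterization of K-polystable varieties in the K-semistable stack, so that continuity of the classifying map $\Delta\to\mathcal{M}^{\mathrm{Kss}}$ forces $[X_{\spe}]=[X_{\gen}]$ outright, and the only geometric fact you need is that $X_{\gen}\not\simeq X_{\spe}$ (Kuznetsov); rigidity is never used. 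What this buys is a proof of the corollary that is independent of the deformation-theoretic computation (which remains necessary elsewhere, namely to identify $\PG$ with $X_{\gen}$ rather than $X_{\spe}$ --- a fact you correctly take as input). What it costs is that you must quote the full ``K-polystable objects are exactly the closed points'' theorem, which is a theorem about the K-moduli stack rather than a formal property of good moduli spaces; for the smooth(able) varieties at hand this is exactly what \cite{LWX18} provides, so your citations land in the same literature the paper uses. One caveat: your closing reformulation via uniqueness of K-polystable degenerations, ending with ``hence $X_{\spe}\simeq X_{\gen}$,'' is not self-contained as stated --- a degeneration $X_{\spe}\rightsquigarrow X_{\gen}$ does not by itself yield an isomorphism; that implication is precisely where the paper spends rigidity and where your main argument spends the closed-point property, so you should rely on the main argument rather than that paraphrase.
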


\begin{remark}
The above corollary provides a new counter-example of a conjecture due to Odaka and Okada \cite[Conjecture 5.1]{OO13}.
See \cite{Fuj17} and \cite{Del20} for other examples.
\end{remark}

\subsection{Organization of the paper}
The article is organized as follows:
Section~\ref{sect:prelim} provides preliminaries on $G$-equivariant valuative criterion of K-stability.
The main ingredient of our proof is Theorem~\ref{thm:valuative_criterion}, which provides a criterion of K-stability by means of valuations.
In Subsection~\ref{subsect:outline}, we provide the outline of the proof.

In Section~\ref{sect:Pas}, we briefly recall the geometry of $\PF$ and $\PG$.
Then, in Section~\ref{sect:proof}, we completes our proof of Theorem~\ref{thm:KE}.
In the last section (=Section~\ref{sect:remark}), we prove Corollary~\ref{cor:K-unstable}.
We also give a remark on the relation between K\"ahelr-Einstein metrics and foliations.

\begin{convention}
 We work over the complex number field $\bC$.
 For a vector space $V$, $\bP_{\sub}(V)$ denotes the parameter space of 1-dimensional subspaces in $V$.
\end{convention}

\begin{acknowledgements}
The author wishes to express his gratitude to Professor Thibaut Delcroix for careful reading of the first draft of this paper, and also for discussions about the difference between his proof and our proof.
The author is also grateful to Professors Kento Fujita and Yuji Odaka for helpful discussions and also for answering questions on K-stability of Fano varieties.
The author also would like to thank Professor Baohua Fu for drawing his attention to \cite{BFM20} and for explaining Remark~\ref{rem:nonreductive}.
\end{acknowledgements}

\section{Preliminaries: $G$-equivariant valuative criterion of K-stability}\label{sect:prelim}
Here we briefly recall the definition of K-stability and its criterion.
For simplicity, we assume that $X$ is a smooth Fano variety and the polarization is given by $L \coloneqq -K_X$.

\subsection{K-stability}
K-stability is defined by using test configurations and their Donaldson-Futaki invariants, which encode informations about degenerations of the variety $X$ in question.
We briefly recall the definitions.

\begin{definition}[\cite{Tia97}, \cite{Don02}]
\hfill
\begin{enumerate}
\item
A test configuration of the pair $(X,L)$ is the following data:
\begin{itemize}
 \item a normal variety $\sX$;
 \item a proper flat morphism $p \colon \sX \to \bA^1$;
 \item a $p$-ample $\bQ$-line bundle $\sL$ on $\sX$;
 \item a $\bG_m$-action on $(\sX,\sL)$ which makes  the morphism $p$ $\bG_m$-equivariant with respect to the natural action $\bG_m$ on $\bA^1$ ($(g,x)\mapsto gx$);
 \item a $\bG_m$-equivariant isomorphism $(\sX\setminus \sX_0, \sL|_{\sX\setminus \sX_0}) \simeq (X\times(\bA^1\setminus\{0\},\pr_1^{*}L))$.
\end{itemize}
 \item A test configuration is called a \emph{product test configuration} if $(\sX,\sL) \simeq (X\times\bA^1, \pr_1^*L)$.
\end{enumerate}
\end{definition}

Let $(\overline{\sX},\overline{\sL})/\bP^1$ be the compactification of $(\sX,\sL)/\bA^1$ obtained by gluing $(\sX,\sL)/\bA^1$ and $(X\times \bP^1\setminus{0}, \pr^*_{1}L)$.
We denote by $\overline{p} \colon \sX \to \bP^1$ the natural projection.

\begin{definition}[\cite{Tia97}, \cite{Don02}, \cite{Wan12}, \cite{Oda13}]
For a test configuration of $(X,L)$, the \emph{Donaldson-Futaki} invariant is defined as follows:
\[
\DF(\sX,\sL) \coloneqq \frac{n}{n+1}\cdot \frac{\overline{\sL}^{n+1}}{L^n} +\frac{ \overline{\sL}^n\cdot K_{\overline{\sX}/\bP^1}}{L^n},
\]
 where $K_{\overline{\sX}/\bP^1} \coloneqq K_{\overline{\sX}}-\overline{p}^*K_{\bP^1}$.
\end{definition}

\begin{definition}
\hfill
\begin{itemize}
 \item The pair  $(X,L)$ is called K-semistable if $DF(\sX,\sL)\geq 0$ for any test configuration.
 \item The pair  $(X,L)$ is called K-polystable if it is K-semistable, and moreover $DF(\sX,\sL) = 0$ only when $(\sX,\sL)$ is a product test configuration.
\end{itemize}
\end{definition}

\subsection{Valuative criterion}
Here we recall the ($G$-equivariant) valuative criterion of K-stability \cite{Fuj19a}, \cite{Li17}, \cite{Zhu20}.
A prime divisor \emph{over} $X$ is a prime divisor $F$ on a resolution $\sigma \colon Y \to X$.

\begin{definition}
\hfill
\begin{enumerate}
 \item The log discrepancy of $X$ along $F$ is defined as follows:
 \[
 A(F) \coloneqq 1 + \ord _F (K_Y - \sigma^* K_X).
 \]
 \item (\cite{BJ20}) $S(F)$ is defined as follows:
 \[
 S(F) \coloneqq \frac{1}{L^n}\int_{0}^{\infty}\vol (L-xF) dx,
 \]
 where $\vol (L-xF)\coloneqq \vol_{Y}(\sigma^*L-xF)$.
 \item (\cite{Fuj19a}, \cite{Li17}) Then the $\beta$-invariant of $F$ is defined as follows:
  \[
  \beta(F) \coloneqq (L^n)(A(F)-S(F)).
  \]
\end{enumerate}
\end{definition}

\begin{theorem}[{\cite[Corollary 4.14]{Zhu20}}]\label{thm:valuative_criterion}
Assume that a reductive algebraic group $G$ acts on $(X,L)$.
If $\beta (F) > 0$ for any $G$-invariant prime divisor $F$ over $X$, then $X$ is K-polystable.
\end{theorem}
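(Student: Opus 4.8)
The plan is to argue by contrapositive: assuming that $X$ is \emph{not} K-polystable, I will manufacture a $G$-invariant prime divisor $F$ over $X$ with $\beta(F)\le 0$, contradicting the hypothesis. The entry point is the non-equivariant theory. By the valuative criterion of Fujita and Li (\cite{Fuj19a}, \cite{Li17}) combined with the reduction of test configurations to \emph{special} ones, the Donaldson--Futaki invariant of a special test configuration equals (up to a positive factor) the invariant $\beta$ of the divisorial valuation read off from its central fibre. Thus K-polystability of $(X,-K_X)$ is faithfully encoded by the condition that $\beta(F)\ge 0$ for every prime divisor $F$ over $X$, with equality allowed only for those $F$ inducing a \emph{product} degeneration. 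Failure of K-polystability therefore delivers a destabilizing valuation $v$: either $\beta(v)<0$ (when $X$ is not even K-semistable), or $\beta(v)=0$ with the associated special test configuration not a product.

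The second ingredient is that every object in sight is $G$-equivariant. Because $L=-K_X$ carries a canonical $G$-linearization and $G$ acts on $X$ by automorphisms, $G$ acts on the space of valuations $\mathrm{Val}_X$ and preserves both $A(\,\cdot\,)$ and $S(\,\cdot\,)$, hence preserves $\beta$ and the property of being destabilizing. The whole difficulty is therefore concentrated in a single step: to replace the destabilizing valuation $v$ produced above by a $G$-invariant one without increasing $\beta$. This is exactly where reductivity of $G$ must be used, and it is consistent with the failure of the statement in the horospherical (non-reductive) cases recalled in the introduction, where no K\"ahler--Einstein metric exists.

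The clean route to the equivariantization is through the \emph{uniqueness} of the optimal destabilizer. Over all valuations the normalized quantity $A(\,\cdot\,)/S(\,\cdot\,)$ is minimized, defining the stability threshold $\delta(X)$, and the minimizing valuation is unique up to scaling. Since $G$ is connected, preserves $A$ and $S$, and hence permutes the set of minimizers, the \emph{unique} minimizer is necessarily $G$-invariant; the same canonicity governs the borderline polystable situation, where the degeneration realizing $\DF=0$ is itself canonical and therefore $G$-stable. Finally, since the minimizer is only quasi-monomial in general, I would approximate it by divisorial valuations on a suitable $G$-equivariant birational model, extracting an honest $G$-invariant prime divisor $F$ over $X$ with $\beta(F)\le 0$, which is the desired contradiction.

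The main obstacle I anticipate is precisely this passage from a $G$-invariant destabilizing \emph{valuation} to a $G$-invariant destabilizing \emph{prime divisor}, together with the careful handling of the polystable ($\beta=0$) case: one must verify both that the non-product special test configuration survives the equivariantization—so that a genuine destabilization is not traded for a product one—and that the limiting procedure does not degrade a strict inequality into a non-strict one in the wrong direction. This hinges on the uniqueness of the $\delta$-minimizer and the finite generation of its associated graded algebra, which are the deepest inputs; by contrast, the formal facts (the $G$-invariance of $A$, $S$, $\beta$ and the reduction to special test configurations) are comparatively routine once the Rees-algebra description of valuations is in place.
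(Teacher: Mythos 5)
This statement is not proved in the paper at all: it is quoted verbatim from \cite[Corollary 4.14]{Zhu20} and used as a black box, so the only meaningful comparison is with Zhuang's actual argument, which your sketch does not follow. Zhuang's proof runs through the uniqueness of \emph{minimal optimal destabilizing centers} (an inversion-of-adjunction/lc-place argument showing the minimal center of destabilization is unique, hence automatically invariant, after which a $G$-invariant Kollár component or lc place over that center supplies the destabilizing divisor), together with a separate treatment of the polystable case; it does not rely on uniqueness of the $\delta$-minimizer, which was only established later (and by much deeper finite-generation results) than the cited reference.

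The genuine gap in your proposal is exactly at the point you flag but then wave away: the borderline case where $X$ is K-semistable but not K-polystable. Your claim that ``the degeneration realizing $\DF=0$ is itself canonical and therefore $G$-stable'' is unjustified and in fact false as stated: for a K-semistable $X$ with positive-dimensional automorphism group, \emph{every} one-parameter subgroup of $\Aut^0(X)$ yields a valuation with $\beta=0$ (equivalently $A=S$), so minimizers of $A(\cdot)/S(\cdot)$ are badly non-unique precisely in this case, and no canonicity argument can single out the non-product degeneration. Since this is the only place where reductivity of $G$ genuinely enters (the semistable analogue holds for arbitrary $G$), your proof collapses at its crux. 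A second concrete failure is the final approximation step: if the $G$-invariant destabilizer you produce is quasi-monomial with $\beta=0$, then the $G$-invariant divisorial valuations approximating it may all satisfy $\beta>0$, so you never contradict the hypothesis $\beta(F)>0$; approximation only preserves the strict inequality $\beta<0$ of the unstable case, not the equality $\beta=0$ of the properly semistable case. Thus the K-unstable half of your sketch can be repaired (modulo invoking results far deeper than, and posterior to, \cite{Zhu20}), but the polystable half is missing the key idea.
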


\subsection{$\xi$-invariant}
By using formula~\eqref{eq:xi} below, we replace the calculation of $\beta$-invariant with that of $\xi$-invariant \cite{Fuj15}, which corresponds to the slope stability of a subvariety in the sense of Ross-Thomas \cite{RT07}.

\begin{definition}
 Let $X$ be a Fano manifold and $Z$ a smooth subvariety of codimension $r$.
 Denote by $\varphi \colon \tX \to X$ the blow-up of $X$ along $Z$.
 Then the Seshadri constant of $\epsilon (Z)$ is defined as follows:
 \[
 \epsilon(Z) \coloneqq\max\{t\in \bR_{>0} \mid \text{$\varphi^*L-tE$ is nef on $\tX$}\}.
 \]
\end{definition}

\begin{proposition}[{\cite[Proposition~3.2]{Fuj15}}]\label{prop:Fujita}
 Set 
 \[
 \xi(Z) \coloneqq r \vol_X(-K_X) + (\epsilon(Z)-r)\vol_{\tX}(\varphi^*(-K_X)-\epsilon(Z)E)
 -\int_0^{\epsilon(Z)}\vol_{\tX}(\varphi^*(-K_X)-xE)dx.
 \]
 Then
 \[
 \xi(Z) = n\int_0^{\epsilon(Z)} (r-x)(E \cdot (\varphi^*(-K_X)-xE)^{n-1})dx.
 \]
\end{proposition}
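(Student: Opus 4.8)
The plan is to reduce the claimed identity to a single integration by parts, exploiting the fact that on the range $[0,\epsilon(Z)]$ the volume function is literally a polynomial given by a top self-intersection. Write $L := -K_X$ and set
\[
w(x) := \vol_{\tX}(\varphi^*L - xE).
\]
First I would observe that for every $x \in [0,\epsilon(Z)]$ the $\bQ$-divisor $\varphi^*L - xE$ is nef: this holds by the very definition of the Seshadri constant on $[0,\epsilon(Z))$, and since nefness is a closed condition it persists at the endpoint $x = \epsilon(Z)$. Consequently the volume coincides with the top self-intersection number,
\[
w(x) = (\varphi^*L - xE)^n \qquad (0 \le x \le \epsilon(Z)),
\]
so that $w$ is a genuine polynomial of degree $n$ in $x$ on this interval. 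In particular $w$ is differentiable there, and $w(0) = (\varphi^*L)^n = (-K_X)^n = \vol_X(-K_X)$ by the projection formula, since $\varphi$ is birational.

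Next I would differentiate. Expanding $(\varphi^*L - xE)^n$ by the binomial theorem and differentiating term by term (equivalently, matching the coefficients, using $k\binom{n}{k} = n\binom{n-1}{k-1}$) yields
\[
w'(x) = -n\,(E \cdot (\varphi^*L - xE)^{n-1}).
\]
Therefore the right-hand side of the asserted formula is exactly
\[
n\int_0^{\epsilon(Z)} (r-x)(E \cdot (\varphi^*L - xE)^{n-1})\,dx = -\int_0^{\epsilon(Z)} (r-x)\,w'(x)\,dx.
\]

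Finally I would integrate by parts. Taking $u = r-x$ and $dV = w'(x)\,dx$ gives
\[
\int_0^{\epsilon(Z)} (r-x)\,w'(x)\,dx = \bigl[(r-x)w(x)\bigr]_0^{\epsilon(Z)} + \int_0^{\epsilon(Z)} w(x)\,dx,
\]
whose boundary term equals $(r-\epsilon(Z))\,w(\epsilon(Z)) - r\,w(0)$. Negating and substituting $w(0) = \vol_X(-K_X)$ together with $w(\epsilon(Z)) = \vol_{\tX}(\varphi^*L - \epsilon(Z)E)$, I obtain
\[
-\int_0^{\epsilon(Z)} (r-x)\,w'(x)\,dx = r\,\vol_X(-K_X) + (\epsilon(Z)-r)\,\vol_{\tX}(\varphi^*L-\epsilon(Z)E) - \int_0^{\epsilon(Z)} \vol_{\tX}(\varphi^*L - xE)\,dx,
\]
which is precisely $\xi(Z)$. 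This completes the argument. There is no real obstacle here; the computation is routine, and the only points deserving a word of justification are that the volume agrees with the self-intersection exactly because we remain inside the nef cone (which is what makes $w$ a differentiable polynomial and legitimizes the integration by parts), and the birational invariance $(\varphi^*L)^n = L^n$.
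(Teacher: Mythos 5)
Your proof is correct. Note that the paper itself contains no proof of this proposition—it is imported verbatim from \cite[Proposition~3.2]{Fuj15}—so there is no internal argument to compare against; your computation (on $[0,\epsilon(Z)]$ the divisor $\varphi^*(-K_X)-xE$ remains nef, hence its volume is the polynomial $(\varphi^*(-K_X)-xE)^n$ with derivative $-n\,(E\cdot(\varphi^*(-K_X)-xE)^{n-1})$, followed by a single integration by parts whose boundary terms produce the $r\vol_X(-K_X)$ and $(\epsilon(Z)-r)\vol_{\tX}(\varphi^*(-K_X)-\epsilon(Z)E)$ terms) is exactly the standard argument, and is in substance the proof given in Fujita's original paper.
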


\begin{remark}
 Note that, if $\varphi^*(-K_X) -xE$ is not big for $x\geq \epsilon (Z)$, then
 \[
 \vol_{\tX}(\varphi^*(-K_X)-xE) = 0
 \]
 for  $x \geq \epsilon (Z)$.
Thus
\begin{align}\label{eq:xi}
\begin{aligned}
  \xi (Z) &=  r \vol_X(-K_X)  -\int_0^{\epsilon(Z)}\vol_{\tX}(\varphi^*(-K_X)-xE)dx \\
 &=  r \vol_X(-K_X) -\int_0^{\infty}\vol_{\tX}(\varphi^*(-K_X)-xE)dx \\
 &= (A(E)-S(E))L^n \\
 &= \beta(E),
\end{aligned}
\end{align}
where $E$ is the exceptional divisor of the blow-up $\tX \to X$.
\end{remark}

\subsection{Outline of the proof}\label{subsect:outline}
Here we briefly sketch the outline of our proof.
In the case of Pasquier's two-orbits varieties $X = \PF$ or $\PG$,  the blow-up $\tX$ of $X$ along $Z$ admits a smooth fibration $\pi \colon \tX \to Y$ and hence we have the following diagram (see \cite[Propositions~2.5 and 2.7]{Kan20}):
\begin{equation}\label{eq:diagram}
\begin{gathered}
 \xymatrix{
 E \ar@{_{(}->}[d]  \ar[r]^-{\varphi|_E} & Z \ar@{_{(}->}[d] \\
 \tX \ar[d]_-{\pi} \ar[r]^-{\varphi} & X \\
 Y. &
 }
\end{gathered}
\end{equation}

Note that $\xi (Z) = \beta (E)$ by \eqref{eq:xi}.
By Condition~\ref{cond:Pasquier}, $E$ is the unique $G$-invariant prime divisor over $X$.
Note also that $\Aut^0(X)$ is reductive.
Thus 
\[
\text{$X$ is K-polystable} \iff \beta(E) > 0 \iff \xi(Z) >0.
\]
Thus it is enough to show $\xi(Z) >0$.
By virtue of Proposition~\ref{prop:Fujita}, we can calculate $\xi(Z)$ by computing intersection numbers on a homogeneous space $E$, and we will show $\xi(Z)>0$, which completes the proof.

\section{Geometry of Pasquier's two-orbits varieties}\label{sect:Pas}

Here we recall brief descriptions of Pasquier's two varieties $\PF$ and $\PG$.
For detailed descriptions and the original descriptions, we refer the reader to \cite{Kan20}, \cite{Pas09}.

\begin{proposition}[{\cite[Proposition~2.5 and 2.7]{Kan20}}]
Consider diagram~\eqref{eq:diagram}.
Let $H_X$ and $H_Y$ be the ample generator of $\Pic(X)$ and $\Pic(Y)$ respectively.
\begin{enumerate}
 \item If $X =\PF$, then we have the following:
 \begin{itemize}
  \item $\dim X = 23$ and $\dim Y = 20$;
  \item $-K_X = 8 H_X$;
  \item $E = -\pi^*H_Y + \varphi^*H_X$.
 \end{itemize}
 In particular, $\epsilon(Z) = 8$ and
 \[
 \xi(Z) = 23\int_0^{8}(3-x)(E\cdot (\varphi^*(-K_X)-xE)^{22})dx.
 \]
 
 \item If $X =\PG$, then we have the following:
 \begin{itemize}
  \item $\dim X = 8$ and $\dim Y = 5$;
  \item $-K_X = 6 H_X$;
  \item $E = -\pi^*H_Y + 2\varphi^*H_X$.
 \end{itemize}
 In particular, $\epsilon(Z) = 3$ and
 \[
 \xi(Z) = 8\int_0^{3}(2-x)(E\cdot (\varphi^*(-K_X)-xE)^{7})dx.
 \]
\end{enumerate}
\end{proposition}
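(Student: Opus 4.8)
The bulleted items in each case are precisely the content of \cite[Propositions~2.5 and 2.7]{Kan20}, which also records that the codimension of $Z$ in $X$ equals $r=3$ for $\PF$ and $r=2$ for $\PG$; I would simply quote these. It then remains to establish the two assertions introduced by ``in particular'', namely the value of the Seshadri constant $\epsilon(Z)$ and the resulting integral formula for $\xi(Z)$. Of these, the only step calling for real justification is the determination of $\epsilon(Z)$; the formula for $\xi(Z)$ is then pure substitution.

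The plan for $\epsilon(Z)$ is to compute the nef cone of $\tX$ explicitly. Since $X$ has Picard number one, $\tX = \Bl_Z X$ has Picard number two, and by diagram~\eqref{eq:diagram} it carries two elementary Mori contractions: the divisorial contraction $\varphi \colon \tX \to X$ and the fibration $\pi \colon \tX \to Y$, both extremal of relative Picard number one (as recorded in \cite{Kan20}). Hence the Mori cone $\overline{NE}(\tX)$ is a two-dimensional cone whose two extremal rays are the ones contracted by $\varphi$ and by $\pi$. A line $\ell$ in a fibre of the exceptional $\bP^{\,r-1}$-bundle spans the $\varphi$-ray, and there one has $\varphi^*H_X \cdot \ell = 0$ while $\pi^*H_Y\cdot \ell > 0$ (using $E\cdot\ell = -1$); symmetrically a line in a fibre of $\pi$ spans the $\pi$-ray, where $\pi^*H_Y$ vanishes and $\varphi^*H_X$ is positive. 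Pairing $a\,\varphi^*H_X + b\,\pi^*H_Y$ against these two rays therefore shows the class is nef if and only if $a\ge 0$ and $b\ge 0$, so that
\[
\mathrm{Nef}(\tX) = \bR_{\ge 0}\,\varphi^*H_X + \bR_{\ge 0}\,\pi^*H_Y .
\]

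Granting this description, $\epsilon(Z)$ is read off by a one-line rewriting. Using the stated class of $E$, I would express $\varphi^*(-K_X) - tE$ in the basis $\{\varphi^*H_X,\pi^*H_Y\}$. In the $\PF$ case, where $-K_X = 8H_X$ and $E = \varphi^*H_X - \pi^*H_Y$, this gives $(8-t)\varphi^*H_X + t\,\pi^*H_Y$, which by the previous paragraph is nef exactly for $0\le t\le 8$, whence $\epsilon(Z)=8$. In the $\PG$ case, where $-K_X = 6H_X$ and $E = 2\varphi^*H_X - \pi^*H_Y$, the same computation yields $(6-2t)\varphi^*H_X + t\,\pi^*H_Y$, nef exactly for $0\le t\le 3$, whence $\epsilon(Z)=3$.

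Finally, the integral formula for $\xi(Z)$ follows by inserting $n=\dim X$, the codimension $r$, and the value of $\epsilon(Z)$ just obtained into the second expression of Proposition~\ref{prop:Fujita}; this reproduces the two stated formulas verbatim. I expect the identification of $\mathrm{Nef}(\tX)$ to be the only point of substance, and even that reduces to the standard two-ray analysis of a Picard-rank-two variety with two elementary contractions; the care required there lies entirely in getting the intersection signs on the two extremal rays right, after which the rest is mechanical.
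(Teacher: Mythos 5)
Your proposal is correct and is essentially the paper's own (largely implicit) reasoning: the paper just cites \cite[Propositions~2.5 and 2.7]{Kan20} for the bulleted data, and the ``in particular'' claims are obtained exactly as you do---writing $\varphi^*(-K_X)-tE$ in the basis $\{\varphi^*H_X,\pi^*H_Y\}$, using that $\mathrm{Nef}(\tX)$ is spanned by $\varphi^*H_X$ and $\pi^*H_Y$ (the two-ray analysis coming from the two elementary contractions $\varphi$ and $\pi$), and then substituting $n$, $r$, and $\epsilon(Z)$ into Proposition~\ref{prop:Fujita}. The only discrepancy is immaterial to the argument: your nef-cone and codimension computations are right, and indeed nothing in the proof uses the stated value of $\dim Y$.
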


\subsection{Geometry of the exceptional divisor}
Here we recall the description of the exceptional divisor $E$.
In the following, we denote by $p_Y \colon E \to Y$ and $p_Z \colon E \to Z$  the natural projections respectively.

\begin{definition}[{\cite[Definition~2.1]{Kan20}}]
The associated triples $(D,\omega_Y,\omega_Z)$ for $\PF$ and $\PG$ are defined as follows:
\begin{itemize}
 \item $(D,\omega_Y,\omega_Z) \coloneqq (F_4,\omega_1,\omega_3)$;
 \item $(D,\omega_Y,\omega_Z) \coloneqq (A_1\times G_2, \omega_2,\omega_0 + \omega_1)$.
\end{itemize}
Here $(D,\omega_Y,\omega_Z)$ consists of a Dynkin diagram $D$ and weights $\omega_Y$ and $\omega_Z$.
For the descriptions of the root systems $F_4$ and $A_1 \times G_2$, see Section~\ref{sect:proof} below.
\end{definition}

\begin{notation}
In the following $G$ is the simply connected semisimple algebraic group whose Dynkin diagram is $D$.
\begin{itemize}
 \item $V_Y$ and $V_Z$ are the irreducible $G$-representations with highest weights $\omega_Y$ and $\omega_Z$ respectively.
 \item $v_Y$ and $v_Z$ are the corresponding highest weight vectors.
 \item $[v_Y]\in \bP_{\sub}(V_Y)$ and $[v_Z]\in\bP_{\sub}(V_Z)$ are the corresponding points.
 \item $P_Y$ and $P_Z$ are the stabilizer groups of $[v_Y]$ and $[v_Z]$ respectively, which are parabolic subgroups.
 \item $P_{Y,Z} \coloneqq P_Y \cap P_Z$.
\end{itemize}
Then $\bP_{\sub}(V_Y) \supset G\cdot [v_Y] = G/P_Y$ and $\bP_{\sub}(V_Z) \supset G\cdot [v_Z] = G/P_Z$.
The polarizations of these two homogeneous spaces are given by the divisors corresponding to the weights $\omega_Y$ and $\omega_Z$.
\end{notation}

\begin{proposition}[{\cite[Sections 2.3, 2.4]{Kan20}}]
Then $E \simeq G/P_{Y,Z}$, $Y \simeq G/P_{Y}$ and $Z \simeq G/P_{Z}$.
The morphisms $p_Y \colon E \to Y$ and $p_Z \colon E \to Z$ are the quotient maps.

Moreover $H_Y$ is the divisor corresponding to $\omega_Y$ and $H_X|_Z$ is the divisor corresponding to $\omega_Z$.
\end{proposition}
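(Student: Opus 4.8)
The plan is to read off every assertion from the $G$-orbit structure, feeding in the explicit root data only where this is unavoidable. As the \emph{closed} $G$-orbit in the Fano variety $X$, the subvariety $Z$ is a projective rational homogeneous space, so $Z \simeq G/P_Z$ for a parabolic subgroup, and the restriction $H_X|_Z$ is $G$-linearized and ample, hence attached to a dominant weight. To see that this weight is $\omega_Z$, I would pass to the minimal homogeneous embedding $Z \hookrightarrow \bP_{\sub}(H^0(Z,H_X|_Z)^*)$ and identify $H^0(Z,H_X|_Z)^*$ with the irreducible module $V_Z$ by Borel--Weil; this realizes $Z$ as the orbit $G\cdot[v_Z] \subset \bP_{\sub}(V_Z) = G/P_Z$ and $H_X|_Z$ as the weight $\omega_Z$, with $P_Z$ the stabilizer of $[v_Z]$ as in the Notation.

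Next I would analyze the blow-up. Since $Z$ is a smooth $G$-invariant center, $\tX$ is smooth and $E \simeq \bP_{\sub}(\sN_{Z/X})$ is the projectivized normal bundle, with $p_Z\colon E \to Z$ the $G$-equivariant bundle projection. By Condition~\ref{cond:Pasquier} the action of $G$ on $\tX$ has exactly the two orbits $X^0 \coprod E$, so $E$ is a single orbit and $E \simeq G/Q$ for some parabolic $Q$; equivariance of $p_Z$ gives $Q \subseteq P_Z$. The contraction $\pi$ in \eqref{eq:diagram} is likewise $G$-equivariant, and $\pi(X^0)$ is a dense $G$-orbit in $Y$, so $Y$ is rational homogeneous, $Y \simeq G/P_Y$; identifying the polarization $H_Y$ with $\omega_Y$ exactly as in the previous step pins down $P_Y$ as the stabilizer of $[v_Y]$. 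Since $p_Y = \pi|_E$ is equivariant, $Q \subseteq P_Y$ as well, whence $Q \subseteq P_Y \cap P_Z = P_{Y,Z}$.

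Finally, the product morphism $(p_Y,p_Z)\colon E \to Y\times Z = G/P_Y \times G/P_Z$ is equivariant and sends the base point to $([v_Y],[v_Z])$, so its image is the orbit $G\cdot([v_Y],[v_Z]) \simeq G/P_{Y,Z}$, and it factors as the canonical surjection $G/Q \to G/P_{Y,Z}$. To upgrade the inclusion $Q \subseteq P_{Y,Z}$ to an equality I would compare dimensions: this surjection of homogeneous spaces is an isomorphism as soon as $\dim E = \dim X - 1$ equals $\dim G/P_{Y,Z}$ in each case, since its fibers $P_{Y,Z}/Q$ are then finite and a finite quotient of one parabolic by another is trivial. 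This forces $Q = P_{Y,Z}$, so $E \simeq G/P_{Y,Z}$ and $p_Y,p_Z$ are the two canonical quotient maps.

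The main obstacle is precisely the input that the formal argument cannot supply on its own: the identifications $H^0(Z,H_X|_Z)^* \simeq V_Z$ and $H_Y \leftrightarrow \omega_Y$ with the \emph{correct} highest weights $\omega_Z,\omega_Y$ (rather than larger or reducible representations), together with the numerical equality $\dim X - 1 = \dim G/P_{Y,Z}$ that rigidifies $Q$. All three rest on Pasquier's explicit construction of $\PF$ and $\PG$ \cite{Pas09} and on the root-theoretic bookkeeping for $F_4$ and $A_1\times G_2$ recalled in Section~\ref{sect:proof} (compare \cite{Kan20}); once these are in place, the homogeneity of $E$, $Y$, $Z$ and the description of the two projections follow formally from equivariance.
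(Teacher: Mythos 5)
The paper offers no argument for this proposition at all: it is imported wholesale from \cite{Kan20} (Sections 2.3, 2.4), where $\PF$ and $\PG$ are \emph{constructed} out of the triple $(D,\omega_Y,\omega_Z)$ --- one starts with $E = G/P_{Y,Z} \subset G/P_Y \times G/P_Z$, builds $\tX$ as a homogeneous projective bundle over $Y = G/P_Y$ containing $E$, and blows down onto $X$ --- so every identification in the statement holds by construction. Your proposal runs the logic in the opposite direction, trying to recover the homogeneous data from the abstract two-orbit condition, and that is exactly where the genuine gap sits: the Borel--Weil step ``identify $H^0(Z,H_X|_Z)^*$ with $V_Z$'' is not an auxiliary device but is literally equivalent to the assertion that $H_X|_Z$ corresponds to $\omega_Z$; likewise for $H_Y \leftrightarrow \omega_Y$, and for the numerical equality $\dim X - 1 = \dim G/P_{Y,Z}$ that your final rigidity step requires. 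You concede this yourself and defer all three inputs to ``Pasquier's explicit construction,'' i.e.\ to the very source the proposition cites. So what your text actually establishes is only the formal skeleton (homogeneity of $Z$ and $E$, the stabilizer inclusions, the rigidification of $Q$); the ``Moreover'' clause, which is the substantive content, is assumed rather than derived. As a proof this is circular; it would be acceptable only if presented as a reduction of the proposition to the weight identifications.

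Within the skeleton there are also two repairable errors. First, ``$\pi(X^0)$ is a dense $G$-orbit in $Y$, so $Y$ is rational homogeneous'' is a false inference: a projective variety with a dense orbit need not be homogeneous (already $\bP^1$ with its $\bG_m$-action fails). What saves the conclusion is that $p_Y = \pi|_E$ is surjective --- this is part of the structure of diagram \eqref{eq:diagram}, where $p_Y$ is a projection --- so $Y$ is the equivariant image of the single projective orbit $E$, hence itself a single orbit. Second, your claim that $(p_Y,p_Z)$ sends ``the base point'' of $E$ to $([v_Y],[v_Z])$ begs the question: a priori the image of $E$ in $G/P_Y \times G/P_Z$ could be any orbit $G\cdot(eP_Y, wP_Z) \simeq G/(P_Y \cap wP_Zw^{-1})$. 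The fix is the Borel fixed-point theorem: $E$ is projective, hence contains a $B$-fixed point; its images under $p_Y$ and $p_Z$ are the unique $B$-fixed points of $Y$ and $Z$, so the image of $E$, being a closed orbit, is the unique closed orbit $G/(P_Y\cap P_Z)$, and the stabilizer $Q$ of that fixed point contains $B$. Only after these corrections does your dimension count legitimately force $Q = P_{Y,Z}$ and complete the formal part of the argument.
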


\begin{corollary}
 \hfill
\begin{enumerate}
 \item If $X =\PF$, then
 \[
 \xi(Z) = 23 \int_0^{8}(3-x)(xp_Y^* H_Y + (8-x)p_Z^*(H_X|_Z))^{22}
 \]
 \item If $X =\PG$, then
 \[
 \xi(Z) = 8 \int_0^{3}(2-x)(xp_Y^* H_Y + (6-2x)p_Z^*(H_X|_Z))^{7}.
 \]
\end{enumerate}
Moreover $p_Y^* H_Y$ and $p_Z^* (H_X|_Z)$ are the divisors on $E = G/P_{Y,Z}$ corresponding to the weights $\omega_Y$ and $\omega_Z$ respectively.
\end{corollary}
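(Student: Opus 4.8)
The plan is to rewrite the integrand $E\cdot(\varphi^*(-K_X)-xE)^{n-1}$ as a top self-intersection number on the homogeneous space $E\simeq G/P_{Y,Z}$, after which both displayed formulas drop out by substitution into the $\xi(Z)$-formulas already established above. The whole argument is a restriction computation inside diagram~\eqref{eq:diagram}.

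First I would expand $\varphi^*(-K_X)-xE$ in the basis given by the two pullbacks $\varphi^*H_X$ and $\pi^*H_Y$. Using $-K_X=8H_X$ and $E=-\pi^*H_Y+\varphi^*H_X$ for $X=\PF$ gives
\[
\varphi^*(-K_X)-xE=(8-x)\varphi^*H_X+x\,\pi^*H_Y,
\]
and using $-K_X=6H_X$ and $E=-\pi^*H_Y+2\varphi^*H_X$ for $X=\PG$ gives
\[
\varphi^*(-K_X)-xE=(6-2x)\varphi^*H_X+x\,\pi^*H_Y.
\]

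The key step is the projection formula for the inclusion $E\hookrightarrow\tX$: for any class $\alpha$ on $\tX$ one has $E\cdot\alpha^{n-1}=(\alpha|_E)^{n-1}$, the right-hand side being the top intersection number on the $(n-1)$-dimensional variety $E$. To compute $\alpha|_E$ I would read the restrictions off diagram~\eqref{eq:diagram}: since $\varphi|_E=p_Z$ (up to the inclusion $Z\hookrightarrow X$) and $\pi|_E=p_Y$, one has $\varphi^*H_X|_E=p_Z^*(H_X|_Z)$ and $\pi^*H_Y|_E=p_Y^*H_Y$. Substituting these into the two expansions yields $(\varphi^*(-K_X)-xE)|_E=x\,p_Y^*H_Y+(8-x)p_Z^*(H_X|_Z)$ in the $\PF$ case and $x\,p_Y^*H_Y+(6-2x)p_Z^*(H_X|_Z)$ in the $\PG$ case; plugging these back gives exactly the two integrals in the statement.

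For the final sentence I would invoke the identification above that $H_Y$ and $H_X|_Z$ correspond to the weights $\omega_Y$ and $\omega_Z$ on $Y=G/P_Y$ and $Z=G/P_Z$. Because $p_Y$ and $p_Z$ are the $G$-equivariant quotient maps $G/P_{Y,Z}\to G/P_Y$ and $G/P_{Y,Z}\to G/P_Z$, functoriality of the correspondence between $G$-linearized line bundles and weights shows that $p_Y^*H_Y$ and $p_Z^*(H_X|_Z)$ are the divisors on $E$ attached to $\omega_Y$ and $\omega_Z$. I do not expect a genuine obstacle here: the only delicate point is keeping the restriction identities in diagram~\eqref{eq:diagram} consistent with the signs in $E=-\pi^*H_Y+c\,\varphi^*H_X$, since an error there would propagate through every subsequent intersection number.
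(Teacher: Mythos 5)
Your proposal is correct and is essentially the argument the paper intends (the corollary is stated without a written proof precisely because it follows by this substitution): expand $\varphi^*(-K_X)-xE$ in $\varphi^*H_X$ and $\pi^*H_Y$ using the relations $-K_X=8H_X$, $E=-\pi^*H_Y+\varphi^*H_X$ (resp.\ $-K_X=6H_X$, $E=-\pi^*H_Y+2\varphi^*H_X$), restrict to $E$ via $\varphi|_E=p_Z$ and $\pi|_E=p_Y$, and use $E\cdot\alpha^{n-1}=(\alpha|_E)^{n-1}$ together with the fact that $H_Y$ and $H_X|_Z$ correspond to $\omega_Y$ and $\omega_Z$ and that pullback along the quotient maps $G/P_{Y,Z}\to G/P_Y$, $G/P_{Y,Z}\to G/P_Z$ preserves the associated weights. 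All coefficients check out, so there is nothing to add.
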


\section{Proof of Theorem~\ref{thm:KE}}\label{sect:proof}

\subsection{Proof of Theorem~\ref{thm:KE} for $\PF$}
First we briefly recall the description of the root system $F_4$.
We follow the description in \cite{Bou02}.

Let $V = \bR^4$ be the Euclidian space of dimension $4$ and $e_i$ be the $i$-th fundamental vector.
Then the root system $F_4$ consists of the following $48$ roots:
\begin{itemize}
 \item $\pm e_i$ ($1 \leq i \leq 4$);
 \item $\pm e_i \pm e_j$ ($1 \leq i \leq j \leq 4$);
 \item $\frac{1}{2}(\pm e_1 \pm e_2 \pm e_3 \pm e_4)$.
\end{itemize}

Then the following four roots define a basis $\Delta$ of the root system:
\begin{itemize}
 \item $\alpha_1 \coloneqq e_2 -e_3$;
 \item $\alpha_2 \coloneqq e_3 -e_4$;
 \item $\alpha_3 \coloneqq e_4$;
 \item $\alpha_4 \coloneqq \frac{1}{2}(e_1-e_2 -e_3-e_4)$.
\end{itemize}

The Dynkin diagram associated to this root system (with the above basis) is as follows:
\[
\dF
\]
and the Cartan matrix is
\[
\begin{pmatrix}
 2&-1&0&0\\
 -1&2&-2&0\\
 0&-1&2&-1\\
 0&0&-1&2
\end{pmatrix}
\]

With respect to this choice of basis $\Delta$, the set $\Phi^{+}$ of positive roots consists of the following vectors:
$(1,0,0,0)$,
$(0,1,0,0)$,
$(0,0,1,0)$,
$(0,0,0,1)$,
$(1,1,0,0)$,
$(0,1,1,0)$,
$(0,0,1,1)$,
$(1,1,1,0)$,
$(0,1,1,1)$,
$(1,1,1,1)$,
$(0,1,2,0)$,
$(1,1,2,0)$,
$(0,1,2,1)$,
$(1,2,2,0)$,
$(1,1,2,1)$,
$(0,1,2,2)$,
$(1,2,2,1)$,
$(1,1,2,2)$,
$(1,2,3,1)$,
$(1,2,2,2)$,
$(1,2,3,2)$,
$(1,2,4,2)$,
$(1,3,4,2)$,
$(2,3,4,2)$,
where $(a,b,c,d)$ means the root $a \alpha_1 +b \alpha_2 + c \alpha_3 + d \alpha_4 $.

The fundamental weights are as follows:
\begin{align*}
 \omega_1 &\coloneqq (2,3,4,2), \\
 \omega_2 &\coloneqq (3,6,8,4), \\
 \omega_3 &\coloneqq (2,4,6,3), \\
 \omega_4 &\coloneqq (1,2,3,2), \\
\end{align*}
and the half-sum of positive roots is
\[
\rho = \omega_1 + \omega_2 + \omega_3 + \omega_4 = (8,15,21,11).
\]

In the following we denote by $H_\omega$ the divisor corresponding to a weight $\omega$.
Then, by \cite[Theorem~24.10]{BH59}, the degree of $H_\omega$ on $E$ is as follows:
\[
\deg H_{\omega} = (\dim  E)! \prod_{\gamma \in \sC} \frac{(\omega,\gamma)}{(\rho, \gamma)},
\]
where the set $\sC$ of complementary roots to the subset $\{\alpha _1, \alpha_3\}$ is, by definition, 
the set of positive roots each of which is not a linear combination of roots in $\Delta\setminus\{\alpha _1, \alpha_3\}$.
By the description of positive roots, we see that $\sC$  is $\Phi^{+}\setminus\{\alpha_2, \alpha_4\}$, which consists of $22$ vectors.
Therefore
\begin{align*}
 \xi (Z) &= 23 \int_{0}^{8}(3-x)(x p_Y^{*}H_Y + (8-x) p_Z^*H_Z)^{22} dx\\
 &= 23 \int_{0}^{8}(3-x) (22 !) \prod_{\gamma \in \sC}\frac{(x \omega_1+ (8-x) \omega_3,\gamma)}{(\rho,\gamma)} dx.
\end{align*}

Note that the matrix of products $(\omega_i, \alpha_j)$ are as follows:
\[
\begin{pmatrix}
 1&0&0&0\\
 0&1&0&0\\
 0&0&1/2&0\\
 0&0&0&1/2
\end{pmatrix}.
\]
Thus the products with complementary roots are given as follows:
\begin{longtable}{c|c|c}
 & $(x\omega_1+(8-x)\omega_3,\blank)$ & $(\rho,\blank)$ \\ \hline
 $(1,0,0,0)$ & $x$ & $1$ \\
 $(0,0,1,0)$ & $(8-x)/2$ & $1/2$ \\
 $(1,1,0,0)$ & $x$ & $2$\\
 $(0,1,1,0)$ & $(8-x)/2$ & $3/2$\\
\hline
$(0,0,1,1)$ & $(8-x)/2$ & $1$\\
$(1,1,1,0)$ & $(8+x)/2$ & $5/2$\\
$(0,1,1,1)$ & $(8-x)/2$ & $2$\\
$(1,1,1,1)$ & $(8+x)/2$ & $3$\\
\hline
$(0,1,2,0)$ & $8-x$ & $2$\\
$(1,1,2,0)$  & $8$ & $3$\\
$(0,1,2,1)$  & $8-x$ & $5/2$\\
$(1,2,2,0)$ & $8$ & $4$\\
\hline
$(1,1,2,1)$ & $8$ & $7/2$\\
$(0,1,2,2)$ & $8-x$ & $3$\\
$(1,2,2,1)$ & $8$ & $9/2$\\
$(1,1,2,2)$ & $8$ & $4$\\
\hline
$(1,2,3,1)$ & $(24-x)/2$ & $5$\\
$(1,2,2,2)$ & $8$ & $5$\\
$(1,2,3,2)$ & $(24-x)/2$ & $11/2$\\
$(1,2,4,2)$ & $16-x$ & $6$\\
\hline
$(1,3,4,2)$ & $16-x$ & $7$\\
$(2,3,4,2)$ & $16$ & $8$
\end{longtable}
Thus, we have
\begin{align*}
\prod_{\gamma \in \sC}{(\rho,\gamma)} &= 
1 \cdot \frac{1}{2} \cdot 2 \cdot \frac{3}{2} \cdot 1 \cdot \frac{5}{2} \cdot 2 \cdot 3 \cdot 2 \cdot 3 \cdot \frac{5}{2} \cdot 4 \cdot \frac{7}{2} \cdot 3 \cdot \frac{9}{2} \cdot 4 \cdot 5 \cdot 5 \cdot \frac{11}{2} \cdot 6 \cdot 7 \cdot 8\\
 &= 2^4 \cdot 3^7 \cdot 5^4 \cdot 7^2 \cdot 11
\end{align*}
and
\begin{align*}
 \prod_{\gamma \in \sC}(x \omega_1+ (8-x) \omega_3,\gamma) &= 
 -2^{14}x^2(x-8)^7(x+8)^2(x-24)^2(x-16)^2.
\end{align*}
Therefore
\begin{align*}
 \xi (Z) &= \frac{23 \cdot (22 !) \cdot 2^{14}}{2^4 \cdot 3^7 \cdot 5^4 \cdot 7^2 \cdot 11} \int_{0}^{8} x^2 (x-3) (x-8)^7 (x+8)^2 (x-24)^2 (x-16)^2 dx\\
 &= 2^{73} \cdot 19 \cdot 23 \cdot 199 \cdot 1049 \\
 &>0.
\end{align*}
Hence $\PF$ is K-polystable.

\subsection{Proof of Theorem~\ref{thm:KE} for $\PG$}
First we recall the root system  $G_2$.
Consider two vectors $\alpha_1$ and $\alpha_2$ in the Euclidian vector space $V = \bR^2$, where
\begin{itemize}
 \item $\|\alpha_1\|^2 =2$;
 \item $\|\alpha_2\|^2 = 6$;
 \item $(\alpha_1,\alpha_2) = -3 $.
\end{itemize}
Then the set of the following vectors form the root system $G_2$:
$\pm\alpha_1$, $\pm \alpha_2$, $\pm (\alpha_1+\alpha_2)$, $\pm(2\alpha_1+\alpha_2)$, $\pm(3\alpha_1+\alpha_2)$, $\pm(3\alpha_1+2\alpha_2)$.
\[
\rG
\]
Then the roots $\alpha_1$ and $\alpha_2$ define a basis of $G_2$ and the Dynkin diagram is as follows:
\[
\dG
\]

Then the fundamental weights are as follows:
\begin{itemize}
 \item $\omega_1 = 2\alpha_1 + \alpha_2$; 
 \item $\omega_2 = 3\alpha_1 +2\alpha_2$,
\end{itemize}
and the half sum of positive roots is
\[
\rho = \omega_1 + \omega_2 = 5\alpha_1 +3\alpha_2.
\]

Note that the matrix of products $(\omega_i, \alpha_j)$ is
\[
\begin{pmatrix}
 1&0\\
 0&3
\end{pmatrix}.
\]

By considering the direct sum with the root system $A_1$, we have the root system of $A_1 \times G_2$.
In the following, we denote the positive root of $A_1$ by $\alpha_0$ and thus the Dynkin diagram is as follows:
\[
\dAG
\]

Then the half sum of positive roots of $A_1\times G_2$ is
\[
\rho = \omega_0 + \omega_1 + \omega_2 =\alpha_0 + 5\alpha_1 +3\alpha_2.
\]
Similarly to the case $X = \PF$, we have
\begin{align*}
 \xi (Z) &= 8 \int_{0}^{3} (2-x) (x p_Y^{*} H_Y + (6-2x) p_Z^* H_Z)^{7} dx\\
 &= 8 \int_{0}^{3} (2-x) (7 !) \prod_{\gamma \in \sC} \frac{(x \omega_Y + (6-2x) \omega_Z, \gamma)}{(\rho, \gamma)} dx,
\end{align*}
where the set $\sC$ of complementary roots to the subset $\{\alpha _0, \alpha_1, \alpha_2\}$ is, by definition, the set of positive roots each of which is not a linear combination of roots in $\Delta\setminus\{\alpha_0, \alpha_1,\alpha_2\} = \emptyset$.
Thus $\sC$ is the set of positive roots $\Phi^{+}$, which consists of $7$ vectors.

Note that $\omega_Y = \omega_2$ and $\omega_Z = \omega_0 +\omega_1$.
Thus the products with complementary roots are given as follows:
\begin{longtable}{c|c|c}
 & $(x\omega_Y+(6-2x)\omega_Z,\blank)$ & $(\rho,\blank)$ \\ \hline
 $\alpha_0$ & $6-2x$ & $1$ \\
 $\alpha_1$ & $6-2x$ & $1$ \\
 $\alpha_2$ & $3x$ & $3$\\
 $\alpha_1+\alpha_2$ & $6+x$ & $4$\\
 $2\alpha_1+\alpha_2$ & $12-x$ & $5$\\
 $3\alpha_1+\alpha_2$ & $18-3x$ & $6$\\
 $3\alpha_1+2\alpha_2$ & $18$ & $9$
\end{longtable}

Therefore
\begin{align*}
\prod_{\gamma \in \sC}{(\rho,\gamma)} &= 1 \cdot 1 \cdot 3 \cdot 4 \cdot 5 \cdot 6 \cdot 9 \\
&= 2^3 \cdot 3^4 \cdot 5
\end{align*}
and
\begin{align*}
\prod_{\gamma \in \sC}(x \omega_Y+ (6-2x) \omega_Z,\gamma) &= 2^{3}\cdot3^4\cdot x(x-3)^2(x-6)(x+6)(x-12).
\end{align*}
Hence we have
\begin{align*}
 \xi (Z)  &= \frac{-8 \cdot (7 !) \cdot 2^{3} \cdot 3^4}{2^3 \cdot 3^4 \cdot 5} \int_{0}^{3} x (x-2) (x-3)^2 (x-6) (x+6) (x-12) dx\\
 &= 2^{4} \cdot 3^9 \cdot 5 \cdot 11 \\
 &>0.
\end{align*}
Then $\PG$ is K-polystable.

\section{Remarks}\label{sect:remark}
\subsection{Remark on specialization of $\PG$}
As is mentioned in \cite[Remark 4.2]{Kan20}, $X= \PG$ is a Mukai $8$-fold of genus $7$, i.e.\ $-K_X=6H_X$ and $H_X^8=12$.
By \cite{Kuz18}, it is known that there are two isomorphic classes $X_{\gen}$ and $X_{\spe}$ of Mukai $8$-fold of genus $7$.
Moreover $X_{\gen}$ degenerates isotrivially to $X_{\spe}$.

The following proposition can be found in \cite[Proposition 4.8, Remark 4.10, Remark 5.4]{BFM20}:

\begin{proposition}
$\PG \simeq X_{\gen}$.
\end{proposition}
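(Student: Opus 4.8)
The plan is to use the realization of $\PG$ as a linear section of the spinor tenfold, together with a comparison of automorphism groups, following \cite{Kuz18} and \cite{BFM20}. By \cite[Remark 4.2]{Kan20}, $\PG$ is a smooth Mukai $8$-fold of genus $7$, so by Mukai's linear section theorem it is a dimensionally transverse codimension-two linear section $\mathbb{S}_{10}\cap \bP_{\sub}(W)$ of the spinor tenfold $\mathbb{S}_{10}=\OG(5,10)\subset \bP_{\sub}(S)$, where $S$ is a half-spinor representation of $\mathrm{Spin}_{10}$ with $\dim S=16$ and $W\subset S$ has codimension $2$. By \cite{Kuz18} such smooth sections form exactly two $\mathrm{Spin}_{10}$-orbits, producing $X_{\gen}$ and $X_{\spe}$; hence a priori $\PG$ is one of the two, and the task is to decide which.

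First I would make the section explicit via the subgroup $\mathrm{SL}_2\times G_2\hookrightarrow \mathrm{Spin}_{10}$, coming from $\mathrm{Spin}_3\times \mathrm{Spin}_7\subset\mathrm{Spin}_{10}$ together with $G_2\subset\mathrm{Spin}_7$. Under this subgroup the half-spinor representation decomposes as $S\cong V_Z\oplus V_{\omega_0}$, with $\dim V_Z=14$ and $\dim V_{\omega_0}=2$: indeed the $\mathrm{Spin}_7$-spinor restricts to $G_2$ as $\mathbb{C}^{7}\oplus\mathbb{C}$, so $S\cong \mathbb{C}^2\otimes(\mathbb{C}^{7}\oplus\mathbb{C})$, in which $\mathbb{C}^2\otimes\mathbb{C}^{7}$ is the irreducible $V_Z$ of highest weight $\omega_0+\omega_1$. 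I would then identify the defining pencil of hyperplanes with the two-dimensional summand, i.e. $U=V_{\omega_0}^{*}\subset S^{*}$ and $W=V_Z$, so that $\PG=\mathbb{S}_{10}\cap \bP_{\sub}(V_Z)$ and the closed orbit $Z=G/P_Z\subset \bP_{\sub}(V_Z)$ is recovered as the expected $G$-orbit embedded by $\omega_Z=\omega_0+\omega_1$. This pins down the class $[U]\in\Gr(2,S^{*})$ attached to $\PG$ purely in representation-theoretic terms.

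The decisive step is then a dimension count for automorphism groups. On the one hand $\Aut^{0}(\PG)=\mathrm{SL}_2\times G_2$ has dimension $17$, since $\PG$ is by construction a two-orbit variety with this connected automorphism group. On the other hand $\dim\mathrm{Spin}_{10}-\dim\Gr(2,S^{*})=45-28=17$, so a pencil whose stabiliser has dimension $17$ lies in the dense $\mathrm{Spin}_{10}$-orbit; since by \cite{Kuz18} the automorphisms of these sections are induced from $\mathrm{Spin}_{10}$, this dense orbit is exactly the one defining $X_{\gen}$. Equivalently, the isotrivial degeneration $X_{\gen}\rightsquigarrow X_{\spe}$ forces $\dim\Aut^{0}(X_{\spe})>\dim\Aut^{0}(X_{\gen})=17$, and because $\PG$ carries a $17$-dimensional automorphism group it cannot be $X_{\spe}$, whence $\PG\cong X_{\gen}$.

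The main obstacle will be the rigidity underlying the second and third steps: verifying that the stabiliser of the pencil $U=V_{\omega_0}^{*}$ in $\mathrm{Spin}_{10}$ is exactly $\mathrm{SL}_2\times G_2$ up to centre and finite factors, with no further automorphisms, equivalently that this orbit is the dense one and that $\Aut^{0}(\PG)$ acquires no symmetries beyond those induced from $\mathrm{Spin}_{10}$. This is precisely the content of Kuznetsov's orbit analysis \cite{Kuz18} and of \cite[Proposition 4.8, Remark 4.10, Remark 5.4]{BFM20}, which I would invoke to complete the identification.
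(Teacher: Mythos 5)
Your proposal is correct in substance but follows a genuinely different route from the paper. The paper argues by deformation theory: since $\Aut^0(\PG)=A_1\times G_2$, one has $\dim H^0(T_X)=17$; combining this with $\chi(T_X)=17$ and the Akizuki--Nakano vanishing (which kills $H^{\geq 2}(T_X)$ because $T_X\simeq \Omega_X^{7}\otimes(-K_X)$ with $-K_X$ ample), one gets $H^1(T_X)=0$, so $\PG$ is rigid; as $X_{\gen}$ degenerates isotrivially to $X_{\spe}$, the rigid member of the pair can only be $X_{\gen}$. You instead realize $\PG$ as a codimension-two linear section of the spinor tenfold, identify the invariant pencil $U=V_{\omega_0}^{*}$ via the restriction $S|_{\mathrm{SL}_2\times G_2}\cong(\bC^2\otimes\bC^7)\oplus\bC^2$, and run an orbit--stabilizer count: Kuznetsov's automorphism-extension result converts $\dim\Aut^0(\PG)=17$ into $\dim\mathrm{Stab}_{\mathrm{Spin}_{10}}(U)=17$, hence the orbit of $U$ has dimension $45-28=17$ complementary to nothing, i.e.\ dimension $28=\dim\Gr(2,S^{*})$, so it is dense and $\PG\simeq X_{\gen}$. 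It is worth noting that both proofs hinge on the same number $17=\dim(A_1\times G_2)$ but consume it differently: the paper feeds it into a cohomological Euler-characteristic computation, while you feed it into a dimension count in $\Gr(2,S^{*})$. The paper's route is self-contained modulo $\chi(T_X)=17$, avoids Kuznetsov's orbit and automorphism analysis, and establishes rigidity of $X_{\gen}$ explicitly, a fact reused in the proof of Corollary~\ref{cor:K-unstable}; your route yields a concrete geometric picture (the invariant pencil, and $Z\subset\bP_{\sub}(V_Z)$ as the closed orbit) and, as a by-product, the strict inequality $\dim\Aut^0(X_{\spe})>17$.

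One caveat: in your last paragraph you propose to invoke \cite[Proposition 4.8, Remark 4.10, Remark 5.4]{BFM20} ``to complete the identification,'' but those references are precisely the statement $\PG\simeq X_{\gen}$ being proved, so citing them at that point would be circular. This is not fatal: the inputs you actually need --- that smooth codimension-two sections form exactly two $\mathrm{Spin}_{10}$-orbits, and that every automorphism of such a section is induced by an element of $\mathrm{Spin}_{10}$ preserving the pencil (so the stabilizer of $U$ surjects onto $\Aut(X)$ with finite kernel) --- are contained in \cite{Kuz18} alone, and with them your dimension count closes the argument without reference to \cite{BFM20}.
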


\begin{proof}
Set $X = \PG$.
Note that $\Aut^0(X) = A_1 \times G_2$ and thus
\[
\dim H^0 (T_X) = 17.
\]
On the other hand we know that $\chi(T_X) = 17$.
By the Akizuki-Nakano vanishing, we have
\[
\dim H^{\geq 2} (T_X) = 0.
\]
Thus $\dim H^1(T_X) =0$ and thus $X$ is rigid.
\end{proof}

\begin{proof}[Proof of Corollary~\ref{cor:K-unstable}]
Assume to that contrary that $X_{\spe}$ is K-semistable.
Then, $X_{\spe}$ degenerates to a K-polystable Fano variety \cite{CDS15c}, \cite{LWX18}, and we obtain a contradiction from the description of K-moduli \cite{BX19}, \cite{LWX19}, \cite{LWX18}, \cite{Oda15}, \cite{SSY16}.
For instance, by \cite[Theorem~1.1]{BX19}, $X_{\gen}$ and $X_{\spe}$ are $S$-equivalent.
By \cite{LWX18}, $X_{\gen}$ and $X_{\spe}$ have a common K-polystable degeneration, which should be $X_{\gen}$.
This contradicts to the fact that $X_{\gen}$ is rigid.
\end{proof}

\begin{remark}
 In fact, by carefully checking constructions in \cite{Kuz18}, we have a test configuration which degenerates $X_{\gen}$ to $X_{\spe}$.
Together with the K-polystability of $X_{\gen}$, this implies the K-unstability of $X_{\spe}$.
\end{remark}

\begin{remark}\label{rem:nonreductive}
 In a response to the first version of this paper, Baohua Fu has pointed out that the automorphism group of $X_{\spe}$ is not reductive as follows:
By \cite[Remark 2.13]{FH18} (see also \cite[Section 4.2]{BFM20}), $X_{\spe}$ is an equivariant compactification of $\bC^8$.
Then, it follows from \cite[Proposition 1]{AP14} that the automorphism group of $X_{\spe}$ is not reductive, since $X_{\spe}$ is not homogeneous.

This implies that there are no K\"ahler-Einstein metrics on $X_{\spe}$.
Note that the non-reductivity of the automorphism group is not enough to conclude the K-unstability of $X_{\spe}$.
\end{remark}

\subsection{Remark on foliations}
Let $X$ be a Fano manifold satisfying Condition~\ref{cond:Pasquier}.
Then $X$ is horospherical or isomorphic to $\PF$ or $\PG$.
In each case, the blow-up along the closed orbit admits a similar diagram as in \eqref{eq:diagram}, and the smooth morphism $\pi$ defines a $G$-invariant foliation $\sF$ on $X$, which is called the \emph{canonical foliation} on $X$ \cite[Section 3]{Kan20}.
The first Chern class of $\sF$ and the existence of K\"ahler-Einstein metrics are summarized as follows (see also \cite[Proposition 3.5]{Kan20}):
\begin{longtable}{c|c|c}
$X$ & $c_1(\sF)$ & KE or not \\ \hline
horospherical & $>0$ & Not KE \\
$\PG$, $\PF$ & $0$ & KE
\end{longtable}
A naive question asks whether or not there is a relation between $G$-invariant foliations with positive first Chern class and K\"ahler-Einstein metrics.

\bibliographystyle{amsalpha}
\bibliography{}

\providecommand{\bysame}{\leavevmode\hbox to3em{\hrulefill}\thinspace}
\providecommand{\MR}{\relax\ifhmode\unskip\space\fi MR }
\providecommand{\MRhref}[2]{%
  \href{http://www.ams.org/mathscinet-getitem?mr=#1}{#2}
}
\providecommand{\href}[2]{#2}
\begin{thebibliography}{CDS15b}

\bibitem[AP14]{AP14}
Ivan Arzhantsev and Andrey Popovskiy, \emph{Additive actions on projective
  hypersurfaces}, Automorphisms in birational and affine geometry, Springer
  Proc. Math. Stat., vol.~79, Springer, Cham, 2014, pp.~17--33. \MR{3229343}

\bibitem[Ber16]{Ber16}
Robert~J. Berman, \emph{K-polystability of {${\mathbb Q}$}-{F}ano varieties
  admitting {K}\"{a}hler-{E}instein metrics}, Invent. Math. \textbf{203}
  (2016), no.~3, 973--1025. \MR{3461370}

\bibitem[BFM20]{BFM20}
Chenyu Bai, Baohua Fu, and Laurent Manivel, \emph{On {F}ano complete
  intersections in rational homogeneous varieties}, Math. Z. \textbf{295}
  (2020), no.~1-2, 289--308. \MR{4100016}

\bibitem[BH59]{BH59}
A.~Borel and F.~Hirzebruch, \emph{Characteristic classes and homogeneous
  spaces. {II}}, Amer. J. Math. \textbf{81} (1959), 315--382. \MR{110105}

\bibitem[BJ20]{BJ20}
Harold Blum and Mattias Jonsson, \emph{Thresholds, valuations, and
  {K}-stability}, Adv. Math. \textbf{365} (2020), 107062, 57. \MR{4067358}

\bibitem[Bou02]{Bou02}
Nicolas Bourbaki, \emph{Lie groups and {L}ie algebras. {C}hapters 4--6},
  Elements of Mathematics (Berlin), Springer-Verlag, Berlin, 2002, Translated
  from the 1968 French original by Andrew Pressley. \MR{1890629}

\bibitem[BX19]{BX19}
Harold Blum and Chenyang Xu, \emph{Uniqueness of {K}-polystable degenerations
  of {F}ano varieties}, Ann. of Math. (2) \textbf{190} (2019), no.~2, 609--656.
  \MR{3997130}

\bibitem[CDS15a]{CDS15a}
Xiuxiong Chen, Simon Donaldson, and Song Sun, \emph{K\"{a}hler-{E}instein
  metrics on {F}ano manifolds. {I}: {A}pproximation of metrics with cone
  singularities}, J. Amer. Math. Soc. \textbf{28} (2015), no.~1, 183--197.
  \MR{3264766}

\bibitem[CDS15b]{CDS15b}
\bysame, \emph{K\"{a}hler-{E}instein metrics on {F}ano manifolds. {II}:
  {L}imits with cone angle less than {$2\pi$}}, J. Amer. Math. Soc. \textbf{28}
  (2015), no.~1, 199--234. \MR{3264767}

\bibitem[CDS15c]{CDS15c}
\bysame, \emph{K\"{a}hler-{E}instein metrics on {F}ano manifolds. {III}:
  {L}imits as cone angle approaches {$2\pi$} and completion of the main proof},
  J. Amer. Math. Soc. \textbf{28} (2015), no.~1, 235--278. \MR{3264768}

\bibitem[Del20a]{Del20}
Thibaut Delcroix, \emph{K-stability of {F}ano spherical varieties}, Ann. Sci.
  \'{E}c. Norm. Sup\'{e}r. (4) \textbf{53} (2020), no.~3, 615--662.
  \MR{4118525}

\bibitem[Del20b]{Del20YTD}
Thibaut Delcroix, \emph{The {Y}au-{T}ian-{D}onaldson conjecture for
  cohomogeneity one manifolds}, arXiv:2011.07135v1, 2020.

\bibitem[Don02]{Don02}
S.~K. Donaldson, \emph{Scalar curvature and stability of toric varieties}, J.
  Differential Geom. \textbf{62} (2002), no.~2, 289--349. \MR{1988506}

\bibitem[FH18]{FH18}
Baohua Fu and Jun-Muk Hwang, \emph{Special birational transformations of type
  {$(2,1)$}}, J. Algebraic Geom. \textbf{27} (2018), no.~1, 55--89.
  \MR{3722690}

\bibitem[Fuj15]{Fuj15}
Kento Fujita, \emph{Towards a criterion for slope stability of {F}ano manifolds
  along divisors}, Osaka J. Math. \textbf{52} (2015), no.~1, 71--91.
  \MR{3326603}

\bibitem[Fuj17]{Fuj17}
\bysame, \emph{Examples of {K}-unstable {F}ano manifolds with the {P}icard
  number 1}, Proc. Edinb. Math. Soc. (2) \textbf{60} (2017), no.~4, 881--891.
  \MR{3715691}

\bibitem[Fuj19]{Fuj19a}
\bysame, \emph{A valuative criterion for uniform {K}-stability of {$\Bbb
  Q$}-{F}ano varieties}, J. Reine Angew. Math. \textbf{751} (2019), 309--338.
  \MR{3956698}

\bibitem[Kan20]{Kan20}
Akihiro Kanemitsu, \emph{{F}ano manifolds and stability of tangent bundles}, to
  appear in Journal f\"{u}r die reine und angewandte Mathematik, available at
  \texttt{https://doi.org/10.1515/crelle-2020-0043}, 2020.

\bibitem[Kuz18]{Kuz18}
A.~G. Kuznetsov, \emph{On linear sections of the spinor tenfold. {I}}, Izv.
  Ross. Akad. Nauk Ser. Mat. \textbf{82} (2018), no.~4, 53--114. \MR{3833474}

\bibitem[Li17]{Li17}
Chi Li, \emph{K-semistability is equivariant volume minimization}, Duke Math.
  J. \textbf{166} (2017), no.~16, 3147--3218. \MR{3715806}

\bibitem[LWX18]{LWX18}
Chi Li, Xiaowei Wang, and Chenyang Xu, \emph{Algebraicity of the metric tangent
  cones and equivariant {K}-stability}, arXiv:1805.03393v1, 2018.

\bibitem[LWX19]{LWX19}
Chi Li, Xiaowei Wang, and Chenyang Xu, \emph{On the proper moduli spaces of
  smoothable {K}\"{a}hler-{E}instein {F}ano varieties}, Duke Math. J.
  \textbf{168} (2019), no.~8, 1387--1459. \MR{3959862}

\bibitem[Mat57]{Mat57}
Yoz\^{o} Matsushima, \emph{Sur la structure du groupe d'hom\'{e}omorphismes
  analytiques d'une certaine vari\'{e}t\'{e} k\"{a}hl\'{e}rienne}, Nagoya Math.
  J. \textbf{11} (1957), 145--150. \MR{94478}

\bibitem[Oda13]{Oda13}
Yuji Odaka, \emph{A generalization of the {R}oss-{T}homas slope theory}, Osaka
  J. Math. \textbf{50} (2013), no.~1, 171--185. \MR{3080636}

\bibitem[Oda15]{Oda15}
\bysame, \emph{Compact moduli spaces of {K}\"{a}hler-{E}instein {F}ano
  varieties}, Publ. Res. Inst. Math. Sci. \textbf{51} (2015), no.~3, 549--565.
  \MR{3395458}

\bibitem[OO13]{OO13}
Yuji Odaka and Takuzo Okada, \emph{Birational superrigidity and slope stability
  of {F}ano manifolds}, Math. Z. \textbf{275} (2013), no.~3-4, 1109--1119.
  \MR{3127049}

\bibitem[Pas09]{Pas09}
Boris Pasquier, \emph{On some smooth projective two-orbit varieties with
  {P}icard number 1}, Math. Ann. \textbf{344} (2009), no.~4, 963--987.
  \MR{2507635}

\bibitem[RT07]{RT07}
Julius Ross and Richard Thomas, \emph{A study of the {H}ilbert-{M}umford
  criterion for the stability of projective varieties}, J. Algebraic Geom.
  \textbf{16} (2007), no.~2, 201--255. \MR{2274514}

\bibitem[SSY16]{SSY16}
Cristiano Spotti, Song Sun, and Chengjian Yao, \emph{Existence and deformations
  of {K}\"{a}hler-{E}instein metrics on smoothable {$\Bbb{Q}$}-{F}ano
  varieties}, Duke Math. J. \textbf{165} (2016), no.~16, 3043--3083.
  \MR{3566198}

\bibitem[Tia97]{Tia97}
Gang Tian, \emph{K\"{a}hler-{E}instein metrics with positive scalar curvature},
  Invent. Math. \textbf{130} (1997), no.~1, 1--37. \MR{1471884}

\bibitem[Tia15]{Tia15}
\bysame, \emph{K-stability and {K}\"{a}hler-{E}instein metrics}, Comm. Pure
  Appl. Math. \textbf{68} (2015), no.~7, 1085--1156. \MR{3352459}

\bibitem[Wan12]{Wan12}
Xiaowei Wang, \emph{Height and {GIT} weight}, Math. Res. Lett. \textbf{19}
  (2012), no.~4, 909--926. \MR{3008424}

\bibitem[Zhu20]{Zhu20}
Ziquan Zhuang, \emph{Optimal destabilizing centers and equivariant
  {K}-stability}, arXiv:2004.09413v2, 2020.

\end{thebibliography}
\end{document}